\begin{document}

\theoremstyle{plain}

\newtheorem{thm}{Theorem}[section]
\newtheorem{lem}[thm]{Lemma}
\newtheorem{conj}[thm]{Conjecture}
\newtheorem{pro}[thm]{Proposition}
\newtheorem{cor}[thm]{Corollary}
\newtheorem{que}[thm]{Question}
\newtheorem{rem}[thm]{Remark}
\newtheorem{defi}[thm]{Definition}

\newtheorem*{thmA}{THEOREM A}
\newtheorem*{thmB}{THEOREM B}
\newtheorem*{conjC}{CONJECTURE C}
\newtheorem*{conjD}{CONJECTURE D}

\newtheorem*{thmAcl}{Main Theorem$^{*}$}
\newtheorem*{thmBcl}{Theorem B$^{*}$}

\numberwithin{equation}{section}

\newcommand{\Maxn}{\operatorname{Max_{\textbf{N}}}}
\newcommand{\Syl}{\operatorname{Syl}}
\newcommand{\dl}{\operatorname{\mathfrak{d}}}
\newcommand{\Con}{\operatorname{Con}}
\newcommand{\cl}{\operatorname{cl}}
\newcommand{\Stab}{\operatorname{Stab}}
\newcommand{\Aut}{\operatorname{Aut}}
\newcommand{\Ker}{\operatorname{Ker}}
\newcommand{\IBr}{\operatorname{IBr}}
\newcommand{\Irr}{\operatorname{Irr}}
\newcommand{\SL}{\operatorname{SL}}
\newcommand{\FF}{\mathbb{F}}
\newcommand{\NN}{\mathbb{N}}
\newcommand{\N}{\mathbf{N}}
\newcommand{\C}{\mathbf{C}}
\newcommand{\OO}{\mathbf{O}}
\newcommand{\F}{\mathbf{F}}

\newcommand{\bG}{\mathbf{G}}
\newcommand{\bL}{\mathbf{L}}
\newcommand{\bN}{\mathbf{N}}
\newcommand{\uch}{{\operatorname{UCh}}}

\renewcommand{\labelenumi}{\upshape (\roman{enumi})}

\newcommand{\GL}{\operatorname{GL}}
\newcommand{\Sp}{\operatorname{Sp}}
\newcommand{\PGL}{\operatorname{PGL}}
\newcommand{\PSL}{\operatorname{PSL}}
\newcommand{\SU}{\operatorname{SU}}
\newcommand{\PSU}{\operatorname{PSU}}
\newcommand{\PSp}{\operatorname{PSp}}

\providecommand{\V}{\mathrm{V}}
\providecommand{\E}{\mathrm{E}}
\providecommand{\ir}{\mathrm{Irr_{rv}}}
\providecommand{\Irrr}{\mathrm{Irr_{rv}}}
\providecommand{\re}{\mathrm{Re}}

\def\irrp#1{{\rm Irr}_{p'}(#1)}

\def\Z{{\mathbb Z}}
\def\C{{\mathbb C}}
\def\Q{{\mathbb Q}}
\def\irr#1{{\rm Irr}(#1)}
\def\ibr#1{{\rm IBr}(#1)}
\def\irrv#1{{\rm Irr}_{\rm rv}(#1)}
\def \c#1{{\cal #1}}
\def\cent#1#2{{\bf C}_{#1}(#2)}
\def\syl#1#2{{\rm Syl}_#1(#2)}
\def\nor{\trianglelefteq\,}
\def\oh#1#2{{\bf O}_{#1}(#2)}
\def\Oh#1#2{{\bf O}^{#1}(#2)}
\def\zent#1{{\bf Z}(#1)}
\def\det#1{{\rm det}(#1)}
\def\ker#1{{\rm ker}(#1)}
\def\norm#1#2{{\bf N}_{#1}(#2)}
\def\alt#1{{\rm Alt}(#1)}
\def\iitem#1{\goodbreak\par\noindent{\bf #1}}
   \def \mod#1{\, {\rm mod} \, #1 \, }
\def\sbs{\subseteq}

\def\gc{{\bf GC}}
\def\ngc{{non-{\bf GC}}}
\def\ngcs{{non-{\bf GC}$^*$}}
\newcommand{\notd}{{\!\not{|}}}
\newcommand{\Out}{{\mathrm {Out}}}
\newcommand{\Mult}{{\mathrm {Mult}}}
\newcommand{\Inn}{{\mathrm {Inn}}}
\newcommand{\IBR}{{\mathrm {IBr}}}
\newcommand{\IBRL}{{\mathrm {IBr}}_{\ell}}
\newcommand{\IBRP}{{\mathrm {IBr}}_{p}}
\newcommand{\ord}{{\mathrm {ord}}}
\def\id{\mathop{\mathrm{ id}}\nolimits}
\renewcommand{\Im}{{\mathrm {Im}}}
\newcommand{\Ind}{{\mathrm {Ind}}}
\newcommand{\diag}{{\mathrm {diag}}}
\newcommand{\soc}{{\mathrm {soc}}}
\newcommand{\End}{{\mathrm {End}}}
\newcommand{\sol}{{\mathrm {sol}}}
\newcommand{\Hom}{{\mathrm {Hom}}}
\newcommand{\Mor}{{\mathrm {Mor}}}
\newcommand{\St}{{\sf {St}}}
\def\rank{\mathop{\mathrm{ rank}}\nolimits}
\newcommand{\Tr}{{\mathrm {Tr}}}
\newcommand{\tr}{{\mathrm {tr}}}
\newcommand{\Gal}{{\it Gal}}
\newcommand{\Spec}{{\mathrm {Spec}}}
\newcommand{\ad}{{\mathrm {ad}}}
\newcommand{\Sym}{{\mathrm {Sym}}}
\newcommand{\Char}{{\mathrm {char}}}
\newcommand{\pr}{{\mathrm {pr}}}
\newcommand{\rad}{{\mathrm {rad}}}
\newcommand{\abel}{{\mathrm {abel}}}
\newcommand{\codim}{{\mathrm {codim}}}
\newcommand{\ind}{{\mathrm {ind}}}
\newcommand{\Res}{{\mathrm {Res}}}
\newcommand{\Ann}{{\mathrm {Ann}}}
\newcommand{\Ext}{{\mathrm {Ext}}}
\newcommand{\Alt}{{\mathrm {Alt}}}
\newcommand{\AAA}{{\sf A}}
\newcommand{\SSS}{{\sf S}}
\newcommand{\CC}{{\mathbb C}}
\newcommand{\CB}{{\mathbf C}}
\newcommand{\RR}{{\mathbb R}}
\newcommand{\QQ}{{\mathbb Q}}
\newcommand{\ZZ}{{\mathbb Z}}
\newcommand{\KK}{{\mathbb K}}
\newcommand{\NB}{{\mathbf N}}
\newcommand{\ZB}{{\mathbf Z}}
\newcommand{\OB}{{\mathbf O}}
\newcommand{\EE}{{\mathbb E}}
\newcommand{\PP}{{\mathbb P}}
\newcommand{\GC}{{\mathcal G}}
\newcommand{\HC}{{\mathcal H}}
\newcommand{\AC}{{\mathcal A}}
\newcommand{\BC}{{\mathcal B}}
\newcommand{\GA}{{\mathfrak G}}
\newcommand{\SC}{{\mathcal S}}
\newcommand{\TC}{{\mathcal T}}
\newcommand{\DC}{{\mathcal D}}
\newcommand{\LC}{{\mathcal L}}
\newcommand{\RC}{{\mathcal R}}
\newcommand{\CL}{{\mathcal C}}
\newcommand{\EC}{{\mathcal E}}
\newcommand{\GCD}{\GC^{*}}
\newcommand{\TCD}{\TC^{*}}
\newcommand{\FD}{F^{*}}
\newcommand{\GD}{G^{*}}
\newcommand{\HD}{H^{*}}
\newcommand{\hG}{\hat{G}}
\newcommand{\hP}{\hat{P}}
\newcommand{\hQ}{\hat{Q}}
\newcommand{\hR}{\hat{R}}
\newcommand{\GCF}{\GC^{F}}
\newcommand{\TCF}{\TC^{F}}
\newcommand{\PCF}{\PC^{F}}
\newcommand{\GCDF}{(\GC^{*})^{F^{*}}}
\newcommand{\RGTT}{R^{\GC}_{\TC}(\theta)}
\newcommand{\RGTA}{R^{\GC}_{\TC}(1)}
\newcommand{\Om}{\Omega}
\newcommand{\eps}{\epsilon}
\newcommand{\varep}{\varepsilon}
\newcommand{\al}{\alpha}
\newcommand{\chis}{\chi_{s}}
\newcommand{\sigmad}{\sigma^{*}}
\newcommand{\PA}{\boldsymbol{\alpha}}
\newcommand{\gam}{\gamma}
\newcommand{\lam}{\lambda}
\newcommand{\la}{\langle}
\newcommand{\ra}{\rangle}
\newcommand{\hs}{\hat{s}}
\newcommand{\htt}{\hat{t}}
\newcommand{\sgn}{\mathsf{sgn}}
\newcommand{\SR}{^*R}
\newcommand{\bT}{\mathbf{T}}
\newcommand{\tn}{\hspace{0.5mm}^{t}\hspace*{-0.2mm}}
\newcommand{\ta}{\hspace{0.5mm}^{2}\hspace*{-0.2mm}}
\newcommand{\tb}{\hspace{0.5mm}^{3}\hspace*{-0.2mm}}
\def\skipa{\vspace{-1.5mm} & \vspace{-1.5mm} & \vspace{-1.5mm}\\}
\newcommand{\tw}[1]{{}^#1\!}
\renewcommand{\mod}{\bmod \,}
\newcommand{\edit}[1]{{\color{red} #1}}
\renewcommand{\emptyset}{\varnothing}

\newcommand\type[1]{\operatorname{#1}}

\marginparsep-0.5cm

\renewcommand{\thefootnote}{\fnsymbol{footnote}}
\footnotesep6.5pt

\title{The Eaton--Moret\'o Conjecture and  $p$-solvable groups}

\author[Gabriel Navarro]{Gabriel Navarro}
\address[Gabriel Navarro]{Departament de Matem\`atiques, Universitat de Val\`encia, 46100 Burjassot,
Val\`encia, Spain}
\email{gabriel@uv.es}
  
\thanks{This research   is supported by Grant  PID2022-137612NB-I00 funded by MCIN/AEI/ 10.13039/501100011033 and ERDF ``A way of making Europe." The author would like to thank Alex Moret\'o,
J. M. Mart{\'\i}nez and Noelia Rizo for useful conversations on the subject.}

\keywords{Eaton-Moret\'o conjecture, Brauer $p$-blocks, character degrees}

\subjclass[2010]{Primary 20D20; Secondary 20C15}

\begin{abstract}
We prove that the Eaton--Moret\'o conjecture is true for the principal blocks of the $p$-solvable groups.
\end{abstract}

\maketitle

\section{Introduction}   
The Eaton--Moret\'o conjecture (\cite{EM}) is an extension of the famous Brauer's Height Zero conjecture (recently proved in \cite{MNST} and \cite{Ru},
after   \cite{KM}). It proposes an exciting equality between the irreducible character degrees in Brauer $p$-blocks and those of its defect groups.  Suppose that $B$ is a $p$-block of a finite group $G$ with defect group $D$. Write $|G|_p=p^a$ and $|D|=p^d$, where $n_p$ is the largest $p$-power dividing the integer $n$. Richard Brauer proved that
the minimum of the $p$-parts $\chi(1)_p$ of the degrees $\chi(1)$ of the irreducible
characters $\chi \in \irr B$ in $B$ is $p^{a-d}$. In particular, if $\chi \in \irr B$ then $\chi(1)_p=p^{a-d +h}$ for a unique integer $h=h(\chi) \ge0$,   called the
height of $\chi$.  Brauer's Height Zero Conjecture, now a theorem, asserts that $D$ is abelian if and only if $h(\chi)=0$ for all $\chi \in \irr B$. 
Suppose now that $D$ is not abelian, and therefore that $h(\chi)>0$ for some $\chi \in \irr B$. If $mh(B)$ is the minimum of the non-zero heights of $\irr B$, then the Eaton--Moret\'o conjecture proposes that
$$mh(B)=mh(D) \, .$$
This conjecture has been extensively studied (\cite{EM}, \cite{BM}, \cite{FLZ}, \cite{MMR}, etc)
and it has strong support,  although mainly outside $p$-solvable groups. 
(There is even a version of this conjecture for fusion systems in \cite{KLLS}.)
It is therefore   for $p$-solvable groups where evidence for the conjecture is  weaker. Until now.

   \begin{thmA}
  Suppose that $B$ is the principal $p$-block of a  finite $p$-solvable group $G$ with a non-abelian defect group $D$.
  Then $mh(B)=mh(D)$.
   \end{thmA}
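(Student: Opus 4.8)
I would argue by induction on $|G|$ and fix a counterexample of least order. Since $B=B_0(G)$ is the principal block, a defect group is a Sylow $p$-subgroup $P$ of $G$, so $|G|_p=|P|$, $\chi(1)_p=p^{h(\chi)}$ for every $\chi\in\irr B$, and $mh(D)=mh(P)$ is the least $\log_p$ of a nonlinear degree of $P$. Since $\oh{p'}G$ is a normal $p'$-subgroup, the principal block of $G$ and that of $G/\oh{p'}G$ have the same irreducible characters with the same degrees and isomorphic defect groups; so minimality forces $\oh{p'}G=1$. As $p$-solvable groups are solvable, it follows that $Q:=\oh pG=\F(G)=\F^{*}(G)$ and $\cent GQ\le Q$; note $Q\ne 1$. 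It remains to prove $mh(B_0(G))=mh(P)$.

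The model case is $Q=P$, i.e.\ $P\nor G$, so $G=P\rtimes H$ with $H$ a $p'$-group by Schur--Zassenhaus. If $\theta\in\irr P$ lies under $\chi\in\irr G$, then $P\le I_G(\theta)$ and $\chi(1)=[G:I_G(\theta)]\,\theta(1)\,\psi(1)$ with $[G:I_G(\theta)]$ and $\psi(1)$ both dividing $|H|$, so $\chi(1)_p=\theta(1)$. Hence no character of $G$ has positive height below $mh(P)$, and already $mh(B_0(G))\ge mh(P)$; for the reverse inequality one must produce a $\theta\in\irr P$ with $\theta(1)=p^{mh(P)}$ lying below some $\chi\in\irr{B_0(G)}$. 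For this one uses the defect-group-preserving bijection $\mathrm{Bl}(G)\leftrightarrow\mathrm{Bl}(G/P)$ (sending $B_0(G)$ to $B_0(H)$), the Fong--Reynolds correspondence, and $\cent GP=\zent P$, to identify the $H$-orbits on $\irr P$ that feed $B_0(G)$ and to see that a ``smallest nonlinear'' one is among them; the fact that $P$ is non-abelian enters through the Height Zero Theorem applied to $B_0(G)$.

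For $Q<P$ I would reduce by Clifford theory over $Q$. Given $\varphi\in\irr Q$ with inertia group $T=I_G(\varphi)$, the characters of $G$ over $\varphi$ and the blocks covering $\varphi$ are matched, via the Clifford and Fong--Reynolds correspondences, with those of a central extension $\hat T$ of $T/Q$, and $\chi(1)_p=[G:T]_p\cdot\varphi(1)\cdot\mu(1)_p$ for a projective character $\mu$ of $T/Q$; since $|G/Q|<|G|$, the inductive hypothesis applies to the relevant section. The main obstacle is precisely that $\bigl(\chi(1)/\varphi(1)\bigr)_p$ can grow both through $[G:T]_p$ and through the projective degrees over $T/Q$, so that controlling the \emph{least positive} value of $\log_p\bigl(\chi(1)/\varphi(1)\bigr)_p$ as $\chi$ runs over the characters of $B_0(G)$ lying above $\varphi$ requires a relative, block-theoretic strengthening of the Gluck--Wolf theorem for $p$-solvable groups: roughly, that this minimal positive relative height is governed by $mh$ of a Sylow $p$-subgroup of the section $T/Q$ (twisted by the cocycle attached to $\varphi$) and is attained inside the principal block. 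Establishing this relative statement, and reconciling it with the base case $Q=P$, is the heart of the argument; the rest is bookkeeping with the reductions above.
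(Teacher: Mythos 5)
Your reductions at the start are correct and match the paper: since $G$ is $p$-solvable, $\irr{B_0(G)}=\irr{G/\oh{p'}G}$ (Theorem 10.20 of \cite{N1}), so one may assume $\oh{p'}G=1$, the defect group is $P\in\syl pG$, and the task becomes purely character-theoretic: produce $\chi\in\irr G$ with $1\ne\chi(1)_p\le m(P)$. You also correctly defer the inequality $mh(D)\le mh(B)$ to known results (the paper cites Theorem B of \cite{EM}; it also follows from \cite{Ro}).

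However, there is a genuine gap exactly where you say ``the heart of the argument'' lies. In the case $\oh pG<P$ you propose a Clifford/Fong--Reynolds descent over $Q=\oh pG$ together with ``a relative, block-theoretic strengthening of the Gluck--Wolf theorem,'' but you neither formulate nor prove such a statement, and it is not at all clear that one exists in the form you would need: the issue is precisely to control the least positive $p$-part of $\chi(1)/\varphi(1)$ over a fixed $\varphi\in\irr Q$ while simultaneously staying in the principal block, and Gluck--Wolf-type results bound $p$-parts of degrees, not minimal nonzero heights. This is not bookkeeping; it is the entire content of the theorem. The paper takes a different and complete route: induct along the series $L=\Oh{p'}{\Oh pG}\nor K=\Oh pG\nor G$, first obtaining a suitable $\tau\in\irr{LP}$ by induction, then pushing $\tau$ up to $G$; the critical obstruction (when the induced character only has $p'$-degree constituents) is resolved with the \emph{relative Glauberman correspondence} (Theorem E of \cite{NSV}), combined with Wolf's invariance lemma (Lemma 1.4 of \cite{W}) and a small extendibility lemma. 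That machinery has no counterpart in your sketch.

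A smaller point: your base case $P\nor G$ is overcomplicated. Once $\oh{p'}G=1$ and $G$ is $p$-solvable, every irreducible character of $G$ already lies in the principal block, so any $\theta\in\irr P$ of degree $m(P)$ and any $\chi$ over it gives $\chi(1)_p=\theta(1)=m(P)$ directly, with no need for Fong--Reynolds or the Height Zero Theorem.
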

   
 Once the principal block of the $p$-solvable case of the conjecture is established, it is now natural to ask if  a reduction of the conjecture to a question on simple groups
   is possible, at least in the principal block case. As shown in \cite{MMR},   
   this appears to be difficult.   It  also seems difficult, if not impossible, to prove the general $p$-solvable case of the conjecture with the techniques used in this paper. Some new idea is needed.
   
     \medskip
   
   We do remind the reader that the inequality $mh(D) \le mh(B)$ in the conjecture is a consequence of the well established Dade--Robinson Conjecture (\cite{Ro});
   so it is on the inequality $mh(B) \le mh(D)$ where the main focus in this note is.
   \medskip
   
   Finally, we remark that Theorem A solves  Conjecture 4.4 in \cite{N3}.

   \section{Preliminaries}
   Our notation for ordinary characters follows \cite{Is} and \cite{N2}.
   Our notation for blocks follows \cite{N1}.
   In this section, we collect the key results used in the proof of Theorem A for the reader's convenience.
   
   \begin{lem}\label{tom}
   Assume that $A$ acts on $G$ via automorphisms,
   $N\nor G$ is $A$-invariant, $(|G:N|,|A|)=1$,  and $\cent{G/N}A=G/N$.
   Let $\chi \in \irr G$ and $\theta \in \irr N$ such that $[\chi_N, \theta]\ne 0$.
   Then $\chi$ is $A$-invariant if and only if $\theta$ is $A$-invariant.
   \end{lem}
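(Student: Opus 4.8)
The plan is to exploit the coprimeness hypothesis $(|G:N|,|A|)=1$ together with the Glauberman–Isaacs correspondence and Clifford theory. Fix $\chi\in\irr G$ and $\theta\in\irr N$ with $[\chi_N,\theta]\ne 0$, and let $T=I_G(\theta)$ be the inertia group of $\theta$ in $G$. The assumption $\cent{G/N}{A}=G/N$ says that $A$ acts trivially on $G/N$; equivalently, $[G,A]\le N$, so $A$ normalizes every subgroup of $G$ containing $N$. In particular $A$ acts on $T$ and on the set $\irr{T\mid\theta}$ of irreducible characters of $T$ lying over $\theta$, and Clifford correspondence $\psi\mapsto\psi^G$ is an $A$-equivariant bijection $\irr{T\mid\theta}\to\irr{G\mid\theta}$. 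Hence $\chi$ is $A$-invariant if and only if its Clifford correspondent $\psi\in\irr{T\mid\theta}$ is $A$-invariant, which reduces the statement to the case $T=G$, i.e. $\theta$ is $G$-invariant (note $\theta$ is then automatically $A$-invariant if we can show the forward direction, and conversely we must produce an $A$-invariant $\chi$ over an $A$-invariant $\theta$).

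For the direction ``$\theta$ $A$-invariant $\Rightarrow$ some/every constituent is $A$-invariant'': assume $\theta$ is $G$-invariant and $A$-invariant. Since $(|G:N|,|A|)=1$ and $\theta$ extends to its inertia group trivially here, consider the semidirect product $\Gamma=G\rtimes A$ (or the relevant overgroup) in which $\theta$ is $\Gamma$-invariant; the obstruction to $A$-invariance of a constituent lives in the Schur multiplier / the cohomology controlling the character triple $(\Gamma/N\text{-action}, N, \theta)$. Because $A$ acts trivially on $G/N$ and $(|G/N|,|A|)=1$, a standard coprime-action argument (e.g. via the theory of character triples, Isaacs' \emph{Character Theory}, Ch.~11, or Navarro's book) shows the number of $A$-invariant constituents of $\theta^G$ equals the number of $A$-invariant irreducible characters of a certain coprime central extension of $G/N$, which is nonzero by Glauberman; more directly, the action of $A$ on $\irr{G\mid\theta}$ has a fixed point by a Brauer-permutation-lemma / coprime-action counting argument since $A$ permutes this set and $|\irr{G\mid\theta}|$ together with the relevant character-theoretic data is $A$-fixed. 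The cleanest route is: $\theta$ $G$-invariant and $(|G:N|,|A|)=1$ forces $\theta$ to extend to $G$ (this uses that $G/N$ — a $p'$-group relative to... — here one invokes that $\theta$ extends because the obstruction cocycle is killed by the coprimeness, cf. \cite[Cor.~6.28 or Ch.~11]{Is}), and one chooses a \emph{canonical} ($A$-invariant) extension $\hat\theta$; then $\irr{G\mid\theta}=\{\beta\hat\theta : \beta\in\irr{G/N}\}$ with $\beta\hat\theta$ $A$-invariant iff $\beta$ is, and $\beta=1$ works, giving an $A$-invariant constituent. Finally, since all constituents of $\chi_N$ are $G$-conjugate and $A$ acts trivially on $G/N$, if one constituent is $A$-invariant they all lie in a single $A$-orbit of a $G$-orbit on which $A$ acts trivially — so $\chi$ itself is forced to be $A$-invariant (its $N$-constituent $\theta$ is $A$-fixed and $\chi$ is determined among characters over $\theta$ by $\Gamma$-data that is $A$-fixed); one pins this down by noting $\chi^a$ also lies over $\theta^a=\theta$ with the same multiplicity and the same extension class, hence $\chi^a=\chi$.

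For the converse, ``$\chi$ $A$-invariant $\Rightarrow\theta$ $A$-invariant'': the set of constituents of $\chi_N$ is exactly the $G$-orbit of $\theta$, and $A$ normalizes $G$ and fixes $\chi$, so $A$ permutes this $G$-orbit; but $A$ acts trivially on $G/N$, hence trivially on the $G$-orbit $\{\theta^g : g\in G\}$ (as $\theta^g{}^a=\theta^{a}{}^{g^a}=\theta^a{}^{g}$ — unwinding, $A$-conjugation on the orbit is realized through $G/N$-translation, which is trivial), forcing $\theta^a=\theta$. This direction is essentially formal.

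The main obstacle is the direction producing an $A$-invariant constituent from an $A$-invariant $\theta$: one must ensure the \emph{extendibility} of $\theta$ to $G$ and the existence of an \emph{$A$-stable} choice of extension (or, more abstractly, that the $A$-action on $\irr{G\mid\theta}$ — an action on a $\beta$-coset of $\irr{G/N}$ realized via multiplication by a projective-representation twist — has a fixed point). This is where the hypotheses $(|G:N|,|A|)=1$ and $\cent{G/N}{A}=G/N$ are essential, via coprime cohomology vanishing or the Glauberman correspondence applied to $G/N$; I would quote Isaacs \cite{Is} (Theorems 6.28, 11.31 and the surrounding coprime-action results) rather than reprove it. Everything else — Clifford correspondence, the orbit argument for the converse — is routine.
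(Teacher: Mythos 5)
The paper does not prove this lemma at all: it simply cites Lemma~1.4 of Wolf's paper \cite{W}. So any comparison must be with what a correct self-contained proof would need, and on that score both directions of your argument have real gaps.

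For the direction ``$\chi$ $A$-invariant $\Rightarrow\theta$ $A$-invariant,'' the step ``$A$ acts trivially on $G/N$, hence trivially on the $G$-orbit $\{\theta^g\}$'' is not a valid inference. Your own computation $(\theta^g)^a=(\theta^a)^g$ shows that if $\theta^a=\theta^{g_0}$, then $a$ acts on the orbit $\Omega\cong T\backslash G$ (where $T=I_G(\theta)$) by left translation by $g_0\bmod T$; this is trivial precisely when $g_0\in T$, i.e.\ precisely when $\theta^a=\theta$, which is what you are trying to prove. The hypothesis $(|G:N|,|A|)=1$ must enter here, and in your write-up it never does: as written, the argument would ``prove'' the conclusion without coprimeness, which is false (take $G=C_4$, $N=C_2$, $A=C_2$ acting by inversion). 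The fix is to observe that $A$ normalizes $T$ (because $N\le T$ and $[G,A]\le N$), so $a\mapsto g_0(a)T$ is a map $A\to N_G(T)/T$ whose image has order dividing both $|A|$ and $|G:N|$ (since $N\le T$), hence is trivial, forcing $\theta^a=\theta$.

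For the direction ``$\theta$ $A$-invariant $\Rightarrow\chi$ $A$-invariant,'' your ``cleanest route'' presupposes that the now $G$-invariant (and $A$-invariant) $\theta$ extends to $G$, so that $\irr{G\mid\theta}=\{\beta\hat\theta:\beta\in\irr{G/N}\}$. Nothing in the hypotheses guarantees this: $(|G:N|,|A|)=1$ says nothing about the obstruction class in $H^2(G/N,\CC^\times)$, and neither Corollary~6.28 nor Theorem~11.31 of \cite{Is} applies with the data at hand (you have no control over $(\theta(1),|G:N|)$ or the determinantal order of $\theta$). You flag this yourself as ``the main obstacle,'' but deferring to a citation you cannot actually match does not close it. A self-contained proof along your lines would have to pass to a character triple $(\Gamma^*,N^*,\theta^*)$ isomorphic to $(GA,N,\theta)$ with $N^*$ central and $\theta^*$ linear faithful, and check that the isomorphism respects the $A$-action; that is essentially the machinery Wolf develops, which is exactly why the paper quotes \cite{W} rather than reproving the lemma. (Once extendibility is granted, the rest of your argument for this direction is fine: $H^1(A,\operatorname{Lin}(G/N))=0$ by coprimeness yields an $A$-stable extension $\hat\theta$, $A$ fixes every $\beta\in\irr{G/N}$, so every $\beta\hat\theta$ is $A$-fixed, giving the conclusion for all of $\irr{G\mid\theta}$ at once — your closing paragraph trying to upgrade ``some constituent'' to ``this constituent'' is then unnecessary.)
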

   
   \begin{proof}
   This Lemma 1.4 of \cite{W}.
    \end{proof}
    
    \begin{lem}\label{easy}
    Suppose that $G/N$ is a $p$-group, and let $H\le G$ such that $G=NH$. Write $M=N\cap H$. Let $\theta \in \irr N$ be $G$-invariant, and let
    $\varphi \in \irr M$ be $H$-invariant such that $[\theta_M,\varphi]$ is not divisible by $p$. Suppose that $\theta$ extends to
    $\chi \in \irr G$. Then $\varphi$ extends to $H$.
    \end{lem}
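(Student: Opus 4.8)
The plan is to exploit the coprime-type structure built into the hypotheses: $G/N$ is a $p$-group, $\varphi\in\Irr M$ has multiplicity in $\theta_M$ prime to $p$, and $\theta$ extends. First I would reduce to showing that $\varphi$ has a $p$-rational-type ``canonical'' extension obstruction that vanishes. Concretely, since $\theta$ is $G$-invariant and extends to $\chi\in\Irr G$, and since $G=NH$ with $G/N\cong H/M$ a $p$-group, the restriction $\chi_H$ lies over $\varphi$ (after choosing $\varphi$ to be an irreducible constituent of $\theta_M$ with $[\theta_M,\varphi]$ prime to $p$; note $\chi_M=\theta_M$ so such $\varphi$ is automatically a constituent of $\chi_M$). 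By Clifford theory applied to $M\nor H$, the $H$-invariant character $\varphi$ extends to $H$ if and only if a certain cohomology class $[\beta_\varphi]\in H^2(H/M,\mathbb{C}^\times)$ vanishes, where $\beta_\varphi$ is the factor set attached to a projective representation of $H$ affording $\varphi$. Since $H/M$ is a $p$-group, this class is a $p$-element, so it suffices to show it is trivial after restriction to a Sylow $p$-subgroup of $H/M$, i.e.\ it suffices to show the $p$-part of the obstruction vanishes.

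The key step is to transport the triviality of the obstruction for $\theta$ (which extends, by hypothesis, to $\chi$) across the isomorphism $H/M\cong G/N$. The natural map $H\to G$, combined with $N\nor G$ and $G=NH$, gives a commutative square relating $M\nor H$ and $N\nor G$; under it the cohomology class $[\beta_\theta]\in H^2(G/N,\mathbb{C}^\times)$ (which is trivial since $\theta$ extends to $\chi$) pulls back to $[\beta_\varphi]\in H^2(H/M,\mathbb{C}^\times)$. The mechanism for this pullback is: restrict to $M$ the projective representation $\mathcal{X}$ of $G$ obtained from $\chi$, decompose $\chi_M=\theta_M$ over the $H$-orbit of $\varphi$ (which is just $\{\varphi\}$ since $\varphi$ is $H$-invariant), and use that $[\theta_M,\varphi]$ is prime to $p$ to conclude that the $\varphi$-homogeneous part of the module, as an $H$-module-with-cocycle, carries a cocycle cohomologous (after restriction to a Sylow $p$-subgroup) to the trivial one. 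In other words, the prime-to-$p$ multiplicity guarantees that the projective representation of $H$ on the $\varphi$-isotypic component is, up to a $p'$-order scalar adjustment, an honest linear representation on a Sylow-$p$ level, forcing $[\beta_\varphi]$ to vanish.

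The main obstacle I anticipate is handling the multiplicity $[\theta_M,\varphi]$ correctly: it is not automatic that $\varphi$ appears with multiplicity one in $\theta_M$, so one cannot simply say ``$\theta$ restricted is $\varphi$ up to scalars.'' The way around this is the standard trick: if $\mathcal{Y}$ is a projective $H$-representation affording $\varphi$ with cocycle $\alpha$, then the $\varphi$-part of $\chi_M$ yields a projective $H$-representation affording a multiple $m\varphi$ of $\varphi$ with the \emph{same} cocycle $\alpha$, and since this comes from the honest representation $\chi_H$ of $H$, the cocycle $\alpha^m$ is a coboundary. As $m=[\theta_M,\varphi]$ is prime to $p$ and $\alpha$ has $p$-power order in $H^2(H/M,\mathbb{C}^\times)$, it follows that $\alpha$ itself is a coboundary, so $\varphi$ extends to $H$. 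I would want to double-check the claim that the $\varphi$-isotypic projective representation of $H$ coming from $\chi_H$ has cocycle exactly $\alpha^m$ (and not merely cohomologous to some power) — this is where the care lies, and it follows from the uniqueness-up-to-scalar of projective representations affording a fixed irreducible of $M$, so that on the $m\varphi$-isotypic component the action of $H$ is forced to be $\mathcal{Y}^{\oplus m}$ twisted by scalars, giving cocycle $\alpha^m$ as claimed.
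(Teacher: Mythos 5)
Your argument is correct, but it takes a genuinely different route from the paper. The paper's proof is entirely elementary and avoids projective representations: it writes $\chi_H=\sum e_i\mu_i$ with $\mu_i\in\Irr(H)$, uses $[\chi_M,\varphi]=\sum e_i[(\mu_i)_M,\varphi]$ to locate a constituent $\mu=\mu_i$ of $\chi_H$ with $[\mu_M,\varphi]$ prime to $p$, and then applies Isaacs' Corollary 11.29 (since $H/M$ is a $p$-group and $\varphi$ is $H$-invariant, the ramification $\mu_M/\varphi$ is a $p$-power), so coprimality to $p$ forces $\mu_M=\varphi$. Your route instead goes through the Clifford obstruction $[\alpha]\in H^2(H/M,\CC^\times)$: the $\varphi$-isotypic component of $\chi_H$ is a linear $H$-module decomposing as $\mathcal{Y}\otimes\mathcal{P}$ with $\mathcal{P}$ an $\alpha^{-1}$-projective $H/M$-module of dimension $m=[\theta_M,\varphi]$, and $\det\mathcal{P}$ exhibits $\alpha^{-m}$ as a coboundary; since $H^2$ of the $p$-group $H/M$ is a $p$-group and $p\nmid m$, $[\alpha]$ is trivial. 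Both proofs hinge on the same pivotal feature, that $H/M$ is a $p$-group — the paper via $p$-power ramification indices, you via the Schur multiplier of a $p$-group — but yours trades the short counting argument for more machinery, buying the slightly stronger observation that the order of $[\alpha]$ divides $[\theta_M,\varphi]$. One small imprecision in your final paragraph: the $\varphi$-isotypic component is $\mathcal{Y}\otimes\mathcal{P}$, not ``$\mathcal{Y}^{\oplus m}$ twisted by scalars,'' and it is $\det\mathcal{P}$ (not a cocycle of the tensor product, which is trivial) that kills $\alpha^m$; this does not affect the correctness of the plan.
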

    
    \begin{proof}
   Write $\chi_H=e_1 \mu_1 + \ldots +e_t \mu_t$, where $\mu_i \in \irr H$
   and $e_i$ are positive integers.
   Thus 
   $$[\chi_M, \varphi]= e_1 [(\mu_1)_M, \varphi] + \ldots +e_t [(\mu_t)_M, \varphi] \, .$$
   Hence, there is $i$ such that $[\mu_i, \varphi] $ is not divisible by $p$.
   Since $H/M$ is a $p$-group and $\varphi$ is $H$-invariant, we conclude that $(\mu_i)_M=\varphi$, by using Corollary 11.29 of \cite{Is}. 
     \end{proof}
     
     If a group $A$ acts by automorphisms on another group $G$, it is common to write ${\rm Irr}_A(G)$ for the set of the irreducible
     characters of $G$ that are $A$-invariant. The following is the {\sl relative} Glauberman correspondence.

    \begin{thm}\label{relg}
    Suppose that a $p$-group $P$ acts as  automorphisms on a finite group $G$. Let $N \nor G$ be $P$-invariant such that $G/N$ is a $p'$-group.
    Let $C/N=\cent {G/N}P$. Then there exists a natural bijection
    $^*: {\rm Irr}_P(G) \rightarrow {\rm Irr}_P(C)$. In fact, if $\chi \in {\rm Irr}_P(G)$ then
    $$\chi_C=e\chi^* +p \Delta + \Xi \, ,$$
    where $\Delta$ and $\Xi$ are characters of $C$ or zero, $p$ does not divide $e$, and no irreducible constituent of $\Xi$ lies over some $P$-invariant character of $N$. Furthermore, $[\chi_C, \chi^*]$ is not divisible by $p$.
    \end{thm}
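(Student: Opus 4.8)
The plan is to argue by induction on $|G:N|$, distinguishing the case in which some irreducible constituent of $\chi_N$ can be chosen $G$-invariant from the case in which it cannot; the latter will be reduced via Clifford theory to a smaller group, the former via character triples to the ordinary Glauberman correspondence. Fix $\chi\in{\rm Irr}_P(G)$ and let $\theta_0\in\irr N$ lie under $\chi$. Since $\chi_N$ is a multiple of the sum of the $G$-conjugates of $\theta_0$, these conjugates are permuted by $P$ and their number divides the $p'$-number $|G:N|$; as $P$ is a $p$-group, some $G$-conjugate $\theta$ of $\theta_0$ is $P$-invariant, and we use it in place of $\theta_0$. Then $T:=I_G(\theta)$ is $P$-invariant with $T/N$ a $p'$-group, and the Clifford correspondent $\psi\in\irr T$ of $\chi$ over $\theta$ (so $\psi^G=\chi$) is $P$-invariant, by uniqueness in the Clifford correspondence together with the $P$-invariance of $\chi$ and $\theta$. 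Put $D:=C\cap T$; then $N\le D\le C$, one has $D=I_C(\theta)$, and $D/N=\cent{T/N}P=\cent{G/N}P\cap(T/N)$.

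Assume first $T<G$. Applying the theorem inductively to the action of $P$ on $T$ with normal subgroup $N$, we obtain $\psi^*\in{\rm Irr}_P(D)$ lying over $\theta$ with $\psi_D=e\psi^*+p\Delta_0+\Xi_0$ and $p\nmid e$; since $\theta$ is $T$-invariant it is the only character of $N$ below $\psi_D$, so $\Xi_0=0$. Because $D=I_C(\theta)$ and $\psi^*$ lies over $\theta$, the Clifford correspondent $\chi^*:=(\psi^*)^C\in\irr C$ is irreducible, $P$-invariant, and lies over $\theta$. By the Mackey formula, $\chi_C=(\psi^G)_C=(\psi_D)^C+\Xi$, where $\Xi$ is a sum of characters induced from the non-principal $(C,T)$-double cosets, and the first summand is $e\chi^*+p(\Delta_0)^C$. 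To see that no constituent of $\Xi$ lies over a $P$-invariant character of $N$, apply Glauberman's lemma to the coprime action of $P$ on the transitive $(G/N)$-set of $G$-conjugates of $\theta$: the fixed subgroup $\cent{G/N}P=C/N$ acts transitively on the $P$-invariant conjugates of $\theta$, so a $C$-conjugate of $\theta^{g}$ can be $P$-invariant only if $g\in CT$; hence the non-principal terms involve only non-$P$-invariant characters of $N$. Setting $\Delta:=(\Delta_0)^C$ gives $\chi_C=e\chi^*+p\Delta+\Xi$, and as $\chi^*$ lies over the $P$-invariant $\theta$ it is not a constituent of $\Xi$, so $[\chi_C,\chi^*]=e+p[(\Delta_0)^C,\chi^*]$ is prime to $p$.

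Now assume $T=G$, that is, $\theta$ is $G$-invariant; then $\theta$ is the only character of $N$ under $\chi_C$, the $\Xi$-term is necessarily $0$, and it is enough to find $\chi^*\in{\rm Irr}_P(C)$ with $\chi_C=e\chi^*+p\Delta$ and $p\nmid e$. Since $P$ stabilizes the character triple $(G,N,\theta)$, we replace it by a $P$-equivariantly isomorphic triple $(\hG,Z,\lambda)$ with $Z$ central cyclic in $\hG$, $\lambda$ faithful and linear, a $P$-equivariant isomorphism $\hG/Z\cong G/N$, and a subgroup $\hat C\le\hG$ with $\hat C/Z=\cent{\hG/Z}P$ matching $C$; the isomorphism of triples carries the map $\chi\mapsto\chi_C$ to $\hat\chi\mapsto\hat\chi_{\hat C}$ and preserves multiplicities, so we may assume $N=Z$ is central and cyclic. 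Then $P$ acts trivially on $Z$; writing $Z=Z_p\times Z_{p'}$ for its $p$- and $p'$-parts, $\oh pG=Z_p$ is central, $K:=\oh{p'}G$ is a $P$-invariant $p'$-group, $G=K\times Z_p$, and both the characters over $\lambda$ and the subgroup $C=\cent KP\times Z_p$ are determined by their parts in $K$. We are thus reduced to the coprime action of $P$ on $K$, and the ordinary Glauberman correspondence provides the bijection ${\rm Irr}_P(K)\to\irr{\cent KP}$ together with the characterization of $\rho^*$ as the unique constituent of $\rho_{\cent KP}$ of multiplicity prime to $p$, equivalently $\rho_{\cent KP}=e\rho^*+p\Delta$ with $p\nmid e$ --- precisely what is needed.

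Bijectivity and naturality of $^*$ follow by reversing these reductions: given $\eta\in{\rm Irr}_P(C)$, a $P$-invariant character of $N$ below $\eta$ exists by the same orbit count, its $C$-orbit is recovered from $\eta$, and the Clifford correspondence, the character-triple isomorphism, and the ordinary Glauberman correspondence are each bijective and natural. The one genuinely delicate point is the existence of the \emph{$P$-equivariant} central character-triple isomorphism in the $G$-invariant case, and the verification that it identifies $\cent{G/N}P$ with $\cent{\hG/Z}P$; this relies on $P$ being a $p$-group while the obstruction to extending $\theta$ to $G$ lies in the Schur multiplier of the $p'$-group $G/N$. Everything else is routine Clifford theory and a direct appeal to Glauberman's theorem.
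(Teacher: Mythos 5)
The paper does not actually reprove this statement: its proof is a citation of Theorem~E of \cite{NSV} for the bijection and the decomposition $\chi_C=e\chi^*+p\Delta+\Xi$, followed by the single observation (via Lemma~\ref{tom}) that $\chi^*$ is not a constituent of $\Xi$, so that $[\chi_C,\chi^*]\equiv e\pmod p$. Your proposal, in contrast, is a genuine attempt to prove the whole theorem from scratch by induction on $|G:N|$: choose a $P$-invariant $\theta$ under $\chi$ (possible because the set of constituents of $\chi_N$ is a $P$-stable set of $p'$-size), pass to the $P$-equivariant Clifford correspondent over $T=I_G(\theta)$ when $T<G$, control the error term $\Xi$ by Glauberman's lemma applied to the transitive $G/N$-set of conjugates of $\theta$, and, in the $\theta$ $G$-invariant case, transport the problem through a $P$-equivariant central character-triple isomorphism to the ordinary Glauberman correspondence for a coprime action on a $p'$-group. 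That outline is coherent and the routine parts check out; your closing argument for $p\nmid[\chi_C,\chi^*]$ (that $\chi^*$ lies over the $P$-invariant $\theta$ and hence is not a constituent of $\Xi$) is essentially the same observation the paper extracts from Lemma~\ref{tom}.

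The place where your argument stops short of a proof is exactly the point you yourself flag: the existence of a $P$-equivariant character-triple isomorphism $(G,N,\theta)\to(\hat G,Z,\lambda)$ with $Z$ central cyclic, compatibly identifying $\cent{G/N}{P}$ with $\cent{\hat G/Z}{P}$ and carrying restriction to $C$ to restriction to $\hat C$. The construction in Isaacs~11.28 is not canonically equivariant under an outer $P$-action; making it so requires a genuine argument (e.g.\ working inside $G\rtimes P$ and exploiting that $G/N$ and its Schur multiplier are $p'$-groups while $P$ is a $p$-group, or appealing to a ``strong''/ordered character-triple statement), and that equivariance is precisely the technical content that \cite{NSV} and Wolf's paper \cite{W} supply. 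As written, this crucial reduction is asserted rather than established. So the overall route is sound and genuinely different from the paper (which delegates everything except the last sentence to a citation), but the invariant case would need this reduction to be carried out in detail before your proposal is self-contained.
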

    
    \begin{proof}
    This is Theorem E of \cite{NSV}. Using Lemma \ref{tom},
    notice that $\chi^*$ is not a constituent of $\Xi$. Hence 
    $[\chi_C, \chi^*] \equiv e$ mod $p$, is not divisible by $p$.
    \end{proof}

    \begin{lem}\label{act}
    
    Suppose that a $p$-group $P$ acts coprimely as  automorphisms on a finite group $G$.     Let  
      $Q \le P$. If ${\rm Irr}_P(G)={\rm Irr}_Q(G)$,
    then $\cent {G}P=\cent{G}Q$.
        \end{lem}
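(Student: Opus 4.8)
The plan is to prove the nontrivial inclusion $\cent GQ\subseteq\cent GP$ (the reverse being automatic) by induction on $|P|$; if $Q=P$ there is nothing to do, so the content is the inductive step, where I would first reduce to the case $Q\nor P$ and then combine the equivariance of the Glauberman correspondence with an elementary fact about coprime $p$-actions. For the reduction, assume $Q<P$ and choose a maximal subgroup $M$ of the $p$-group $P$ containing $Q$, so that $M\nor P$. Since invariance under a group entails invariance under each of its subgroups, ${\rm Irr}_P(G)\subseteq{\rm Irr}_M(G)\subseteq{\rm Irr}_Q(G)={\rm Irr}_P(G)$, forcing ${\rm Irr}_M(G)={\rm Irr}_Q(G)$; the inductive hypothesis applied to the $p$-group $M$ acting on $G$ then yields $\cent GM=\cent GQ$. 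So it is enough to show $\cent GP=\cent GM$, and replacing $Q$ by $M$ we may assume $Q\nor P$.

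Now set $L=\cent GQ$. Since $P$ normalizes $Q$ it normalizes $L$, so $P$ acts on $L$ (coprimely, as $|P|$ is prime to $|G|\ge|L|$), while $Q$ acts trivially on $L$ by definition of $L$. Let ${}^{*}\colon{\rm Irr}_Q(G)\to{\rm Irr}(L)$ be the Glauberman correspondence attached to the coprime action of the $p$-group $Q$ on $G$. This bijection is canonical, hence commutes with the action of $P$, which normalizes both $G$ and $Q$: $(\chi^{x})^{*}=(\chi^{*})^{x}$ for all $\chi\in{\rm Irr}_Q(G)$ and $x\in P$. Given $\psi\in{\rm Irr}(L)$, write $\psi=\chi^{*}$; then $\chi\in{\rm Irr}_Q(G)={\rm Irr}_P(G)$ is $P$-invariant, and by equivariance so is $\psi$. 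Thus $P$ fixes every irreducible character of $L$.

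To conclude I would invoke the fact that a $p$-group acting coprimely on a finite group $L$ and fixing every element of ${\rm Irr}(L)$ must centralize $L$: by Brauer's permutation lemma it then fixes every conjugacy class of $L$ setwise, and since each such class has size prime to $p$ (a divisor of $|L|$), the $p$-group has a fixed point on it, so every class of $L$ meets $\cent LP$; hence $L$ is the union of the $L$-conjugates of $\cent LP$, and as a proper subgroup can never be such a union, $\cent LP=L$. Applied to our $P$ acting on $L=\cent GQ$ this gives $\cent GQ\subseteq\cent GP$, completing the proof.

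I expect the step needing the most care to be the reduction to $Q\nor P$ together with the verification that the Glauberman correspondence really is $P$-equivariant in this situation — that $P$ genuinely acts on $L=\cent GQ$ and that the canonical bijection intertwines the two $P$-actions. Once that is secured, the induction and the closing coprime-action fact are routine.
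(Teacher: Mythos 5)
Your proof is correct. The paper disposes of this lemma with a single citation (Lemma 2.2 of \cite{N0}), so it offers no argument to compare against; what you have done is supply a self-contained proof of exactly the result being cited. Your three ingredients are all sound and standard: (1) the reduction to $Q\nor P$ by descending through a maximal (hence normal) subgroup $M$ of $P$ containing $Q$ and using ${\rm Irr}_P(G)\subseteq{\rm Irr}_M(G)\subseteq{\rm Irr}_Q(G)$ together with induction on $|P|$; (2) the $P$-equivariance of the Glauberman correspondence ${}^{*}\colon{\rm Irr}_Q(G)\to{\rm Irr}(\cent GQ)$, which holds because $P$ normalizes both $G$ and $Q$ and the correspondence for a $p$-group $Q$ is characterized intrinsically (e.g.\ $\chi^{*}$ is the unique constituent of $\chi_{\cent GQ}$ with multiplicity prime to $p$), hence commutes with any automorphism of the configuration; and (3) the closing fact that a $p$-group acting coprimely and fixing every irreducible character must act trivially, via Brauer's permutation lemma, the fixed-point count on classes of $p'$-size, and the observation that no finite group is the union of conjugates of a proper subgroup. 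One small stylistic point: after replacing $Q$ by $M$ you are not re-entering the induction (since $|P|$ is unchanged); you are just observing that the normal case remains to be handled directly, which is what you then do. That is fine, but it is worth being explicit that the induction is used only to pass from $Q$ to $M$, not from $M$ to $P$.
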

    
    \begin{proof}
    This follows from Lemma 2.2 of \cite{N0}.
    \end{proof}

    \section{Proof of Theorem A}
 
 If $G$ is a finite group, $N \nor G$ and $\theta \in \irr N$, recall that $\irr{G|\theta}$ is the set of irreducible characters of $G$ such that the restriction $\chi_N$ contains $\theta$.
 If $P$ is a non-abelian $p$-group, let $m(P)$ be the minimum
 character degree among the non-linear irreducible characters of $P$.

  \begin{thm}\label{main}
   Suppose that $G$ is a finite $p$-solvable group,    and $P \in \syl pG$ is not abelian. Assume that $\oh{p'}G=1$.  If $m(P)=p^a$,   then there
 is $\chi \in \irr G$ such that $1 \ne \chi(1)_p \le p^a$.
   \end{thm}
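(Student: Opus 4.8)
The plan is to argue by induction on $|G|$, exhibiting $\chi$ through a Clifford-theoretic analysis over $V:=\oh pG$. If $G=P$ we simply take $\chi\in\irr G$ with $\chi(1)=m(P)=p^{a}$, so assume $G>P$; then $V\ne1$ and, since $G$ is $p$-solvable with $\oh{p'}G=1$, the subgroup $V$ is self-centralizing, $\cent GV=V$. Two elementary facts will be used repeatedly. \emph{A counting fact:} if $H\le G$ with $p\nmid|G:H|$ and $\mu\in\irr H$, then $\mu^{G}$ has an irreducible constituent $\chi$ with $\chi(1)_{p}\le\mu(1)_{p}$ --- otherwise every summand $[\mu^{G},\chi]\chi(1)$ would be divisible by $p\,\mu(1)_{p}$, contradicting $\mu^{G}(1)_{p}=\mu(1)_{p}$ --- and any such $\chi$ lies over every character of a normal subgroup of $G$ lying under $\mu$. \emph{A stability fact:} passing from $G$ to the stabilizer $G_{\vartheta}$ of a character $\vartheta$ of a normal subgroup containing $\oh pG$ preserves $\oh{p'}{G_{\vartheta}}=1$, because $\oh{p'}{G_{\vartheta}}$ centralizes $\oh pG$ and hence lies in $\cent G{\oh pG}=\oh pG$.

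First I fix $\mu\in\irr P$ with $\mu(1)=p^{a}$ and a constituent $\theta\in\irr V$ of $\mu_{V}$. If $\theta$ is not $P$-invariant, every constituent of $\mu^{G}$ lies over $\theta$; passing to $G_{\theta}$ (arranging, with the help of the relative Glauberman correspondence, that $P\cap G_{\theta}\in\syl p{G_{\theta}}$) and applying the counting fact inside $G_{\theta}$ to the Clifford correspondent of $\mu$ yields an irreducible character whose induction $\chi$ to $G$ satisfies $1\ne\chi(1)_{p}\le p^{a}$, the $p$-part being multiplied on induction only by the nontrivial $p$-power $|P:P\cap G_{\theta}|$, which already divides $p^{a}$. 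So we may assume $\theta$ is $P$-invariant; reducing to $G_{\theta}$ (which keeps $P$ a non-abelian Sylow subgroup and, by the stability fact, keeps $\oh{p'}=1$) and then passing up to a constituent of $\mu_{\oh pG}$ --- again disposing of a non-$P$-invariant such constituent as before --- we reach the situation $V=\oh pG$, $\theta\in\irr V$ is $G$-invariant, and $\mu\in\irr{P|\theta}$ has degree $p^{a}$, with $P$ still a non-abelian Sylow subgroup. If $\theta$ is not linear, then every character over $\theta$ has $p$-part $\ge\theta(1)>1$, and the counting fact applied to $\mu^{G}$ now finishes the proof. So we may assume $\theta$ is linear, whence $\ker\theta\nor G$.

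There remain two cases, according to whether $\theta$ extends to $G$, which (by the standard cohomology of linear characters of $p$-power order) is equivalent to $\theta$ extending to $P$. If it does, fix extensions $\hat\theta\in\irr G$ and $\hat\theta_{P}\in\irr P$; by Gallagher's theorem $\mu=\hat\theta_{P}\beta_{0}$ with $\beta_{0}\in\irr{P/V}$ of degree $p^{a}$, so $P/V$ is non-abelian with $m(P/V)=p^{a}$. Applying the inductive hypothesis to $G/V$ (after repeatedly factoring out the $p'$-radical, which does not disturb the Sylow subgroup) produces $\gamma\in\irr{G/V}$ with $1\ne\gamma(1)_{p}\le p^{a}$, and then $\chi:=\hat\theta\,\gamma$ works, transported back through the prime-to-$p$-index Clifford correspondences. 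If $\theta$ does not extend to $G$, pass to $G^{\ast}:=G/\ker\theta$: there $V^{\ast}:=V/\ker\theta$ is cyclic and, carried by a $G^{\ast}$-invariant faithful linear character, central; one then wants to deduce from $\cent{G^{\ast}}{\oh p{G^{\ast}}}=\oh p{G^{\ast}}$ that $G^{\ast}=V^{\ast}$, which makes $G=V=P$ and contradicts that $\theta$ fails to extend to $P$ --- so this case should not occur.

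This last case is where the real work lies, and where Theorem~\ref{relg} and Lemma~\ref{easy} (with Lemmas~\ref{tom} and~\ref{act}) enter: to run the self-centralizing contradiction one needs to know that $\oh p{G^{\ast}}$ is precisely the central cyclic group $V^{\ast}$ and that $\oh{p'}{G^{\ast}}=1$, and --- more importantly --- across the several $p'$-layers of $G$ one must control which characters remain $P$-invariant and which ones extend. The relative Glauberman correspondence does exactly this, replacing $G$ (or one of its layers) by the much smaller Glauberman-correspondent subgroup $C$ while preserving the crucial ``$[\chi_{C},\chi^{\ast}]$ prime to $p$'' condition; Lemma~\ref{easy} then transports the needed extendibility statement from the larger group down to $C$, and Lemmas~\ref{tom} and~\ref{act} are used to verify that the relevant fixed-point subgroups match up. I expect that the cleanest way to make the induction close up --- the non-extending case in particular --- is to prove a slightly stronger ``starred'' statement carrying along a central cyclic $p$-subgroup together with a linear character on it. The main obstacle is precisely this bookkeeping: guaranteeing that at each reduction both the non-abelian Sylow $p$-subgroup and a character of it realizing $m(P)=p^{a}$ over the current normal $p$-subgroup survive.
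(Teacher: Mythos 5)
Your plan runs over $V=\oh pG$ and splits on whether a constituent $\theta\in\irr V$ of $\mu_V$ extends; this is a genuinely different scaffolding from the paper's, which fixes $G=\Oh{p'}G$, sets $K=\Oh pG$, $L=\Oh{p'}K$, and works inside $U=LP$ with a character $\lambda$ of the normal $p'$-layer $L$. Unfortunately your version has two real gaps, both of which you partly flag yourself.

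The first is the parenthetical ``arranging, with the help of the relative Glauberman correspondence, that $P\cap G_\theta\in\syl p{G_\theta}$.'' This is not automatic and is in fact false in general: already in $G=S_4$, $p=2$, with $V=\oh 2G\cong V_4$ and $\theta$ one of the two non-$P$-invariant linear characters, one has $P\cap G_\theta=V_4$, of index $2$ in $G_\theta\cong D_8$. Without this Sylow compatibility the $p$-part bookkeeping in ``the $p$-part being multiplied on induction only by $|P:P\cap G_\theta|$'' breaks down, because $|G:G_\theta|_p$ can strictly exceed $|P:P\cap G_\theta|$. The paper never meets this obstacle: since $L\le Q=U_\lambda\le U=LP$, the stabilizer automatically factors as $Q=L(Q\cap P)$, so $Q\cap P\in\syl pQ$ for free; that structural fact is the reason the argument is organized around $U=LP$ rather than around $G_\theta$.

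The second gap is the non-extending case, which you leave entirely open. After passing to $G^*=G/\ker\theta$, the subgroup $V^*=V/\ker\theta$ is indeed central cyclic, but the deduction ``$G^*=V^*$'' needs both $\oh p{G^*}=V^*$ and $\oh{p'}{G^*}=1$, and neither follows: $\oh p{G/\ker\theta}$ is the preimage of the $p$-core of the quotient and is typically strictly larger than $V^*$, and cutting by $\ker\theta\ne1$ destroys the hypothesis $\oh{p'}G=1$. You say, honestly, that ``this is where the real work lies''; but no mechanism is offered, and the relative Glauberman correspondence is not used as an ingredient of a completed argument here, only gestured at. The paper's use of Theorem~\ref{relg} is quite different: assuming $Q<U$ it produces, from the $p'$-degree constituent $\psi$ of $\tau^G$, a chain $\lambda\to\epsilon\to\xi\to\tilde\xi$ and then runs the correspondence back to conclude $\lambda$ is $P$-invariant, contradicting $Q<U$. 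No extending/non-extending dichotomy appears at all. As it stands, your proposal is a plausible sketch of an alternate route but is not a proof; the two gaps above are exactly where the known argument does something structurally different, and neither seems repairable by the local fixes you suggest.
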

   \begin{proof}
   By induction on $|G|$.   Let $V=\Oh{p'} G$. 
   Hence $P \in \syl pV$. If $V<G$, by induction let 
    $\varphi \in \irr{V}$ be such that $1 \ne \varphi(1)_p \le p^a$.
     Hence if $\chi \in \irr{V|\varphi}$ then we have that $\chi(1)_p=\varphi(1)_p$  by Corollary 11.29 of \cite{Is}, and we are done. 
Hence   we may assume therefore that $G=\Oh{p'} G$.
   Let $K=\Oh pG$  and let $L=\Oh {p'}{K}$. If $K=L$, then $G$
   is a $p$-group, and  in this case, the theorem is clear.   
      Let $U=LP<G$.       By induction and working in $U/\oh{p'}U$, there exists $\tau \in \irr{U}$
   such that $1<\tau(1)_p\le p^a$. 
   Let $\lambda \in \irr L$ be under $\tau$, let $Q=U_\lambda$ be the stabilizer of $\lambda$ in $U$. 
   Write $$\tau^G=a_1\psi_1 + \ldots + a_t\psi_t\, ,$$ where $\psi_i \in \irr G$
   and the $a_i$ are positive integers.
   Since $\tau^G(1)_p=\tau(1)_p$, 
 we cannot have that $\psi_i(1)_p>p^a$ for all $i$. So there is $\psi=\psi_i$ such that $\psi(1)_p\le p^a$. If  $1 \ne \psi(1)_p$, then we are done. 
 So we may assume that $\psi$ has $p'$-degree. Thus $\psi_K \in \irr K$,
 using Corollary 11.29 of \cite{Is}. Also, $[\psi_L, \lambda]$ is not divisible by $p$ by the same result. 
 Notice that, by Clifford's theorem, we have that $\lambda$ has $p'$-degree because it lies under $\psi$.
 If $\lambda$ is $P$-invariant, then $\lambda$ extends to $P$
 (by Lemma \ref{easy}). Let $\tilde\lambda \in \irr U$ be extending $\lambda$. 
 By Gallagher's Corollary 6.19 of \cite{Is}, we have that   $\tau=\tilde\lambda \nu$, where $\tilde\lambda \in \irr U$ extends $\lambda$ and 
 $\nu \in \irr{U/L}$ with $\nu(1)_p=\tau(1)_p$. Since $U/L \cong G/K$, then
 we can see  $\nu \in \irr{G/K}$, and we are done in this case.  Therefore we may assume that $Q<U$. 
 Let $\mu \in \irr Q$ be the Clifford correspondent of $\tau$ over $\lambda$. 
 Then $|U:Q|\mu(1)=\tau(1)$ and therefore $|U:Q|\le p^a$.
 Let $R=Q\cap P$. Then $1<|P:R|=|U:Q|\le p^a$.
 If $\eta$ is an irreducible constituent of $(1_R)^P$, then $\eta(1)$ divides $|P:R|\le p^a$ (using that $R\nor\nor G$ and Corollary 11.29 of \cite{Is}). 
 Since $m(P)=p^a$, we conclude that $\eta$ is linear.  
 We conclude that $P' \sbs R$.
 Hence, $R\nor P$ and $Q=LR \nor U$. 
 Also $M=KQ \nor G$. Let $\gamma \in \irr M$ of degree not divisible by $p$. Let $\chi \in \irr{G|\gamma}$.
 Then $\chi(1)/\gamma(1)$ divides $|G:M|=|P:R|\le p^a$, and therefore
 $\chi(1)_p\le p^a$. Then $\chi$ has necessarily $p'$-degree  and thus $\chi_M=\gamma$. 
 
 We claim that $\cent{K/L}{U/L}=\cent{K/L}{Q/L}$. 
 We have that $U$ acts on $K/L$ by conjugation
 $(kL)^u=k^uL$, with $L$ in the kernel of the action.
 Hence $U/L$ acts coprimely on $K/L$. By using Lemma \ref{act}, we only need to show that every $Q$-invariant character of $K/L$ is $U$-invariant.  If $\eta \in \irr{K/L}$ is $Q$-invariant, then
 $\eta$ extends to $\tilde \eta \in \irr{KQ}=\irr{M}$ by Corollary 6.28 of \cite{Is}, for instance. Now $\tilde\eta$ has degree not divisible by $p$,
 so we have that  $\tilde\eta$ extends to $G$, by the previous paragraph. 
 This proves the claim.
 If $C/L=\cent{K/L}{U/L}=\cent{K/L}{Q/L}$, notice too that
 $\cent{K/L}P=\cent{K/L} R=C$.  Further notice that $N=\norm GQ=\norm GU$. Indeed, since $M\cap U=Q$ and $M\nor G$, we have that $\norm GU \sbs \norm GQ$. Since $\norm KQ/L=\cent{K/L}{Q/L}$ and
 $\norm KU/L=\cent{K/L}{U/L}$, we conclude that $\norm GQ=\norm GU$.

  Let $\rho \in \irr{N}$ be over $\tau$, and  
 let $\epsilon \in \irr{C}$ be under $\rho$ and over $\lambda$. 
Since $\lambda$ is $Q$-invariant, we have that $\lambda$ is $R$-invariant.   By Lemma \ref{tom},
 we have that $\epsilon$ is $R$-invariant, and therefore we have that $\epsilon$ is $Q$-invariant. By Theorem \ref{relg},
 we have $\epsilon=\xi^*$ for some $\xi \in {\rm Irr}_Q(K)$.
 Also, 
 we have that $\xi_C=e\xi^* + p \Delta + \Xi$, where $p$ does not divide $e$, $\Delta$ and $\Xi$ are characters of $C$ or zero, and no irreducible constituent of $\Xi$ lies over some $Q$-invariant character of $L$.
 Hence, no irreducible constituent of $\Xi$ lies over some $P$-invariant character of $L$. Also $[\xi_C, \xi^*]$ is not divisible by $p$ (in particular, is not zero), and thus $\xi$ lies over $\lambda$ . In particular, since $|K/L|$ has order not divisible by $p$, we have that $\xi$ has degree not divisible by $p$.
 Since $\Oh pK=K$, we have that the determinantal order of $\xi$ is not divisible by $p$. Hence, we conclude that $\xi$ extends to some $\tilde\xi \in \irr M$ by Corollary 6.28 of \cite{Is}. Now $\tilde\xi$ has $p'$-degree, and therefore we know that $\tilde\xi$ extends to $G$. We conclude that $\xi$ is $P$-invariant.
 Again by Theorem \ref{relg},
 we also  have that
 $\xi_C=e_1\hat\xi + p \Delta_1 + \Xi_1$, where $\Delta_1$ and $\Xi_1$ are characters of $C$ or zero, $e_1$ is not divisible by $p$, $[\xi_C, \hat\xi]$ is not divisible by $p$, and no irreducible constituent of
 $\Xi_1$ lies over some $P$-invariant character of $L$. 
  We have that $[\Xi, \hat\xi]=0$, since any irreducible constituent of $\hat\xi_L$ would be $P$-invariant by Lemma \ref{tom}.
  Hence
  $$[\xi_C, \hat\xi]=e[\xi^*, \hat\xi] + p [\Delta, \hat\xi] \not\equiv \, 0 \, {\rm mod} \, p \, .$$ Necessarily, we conclude that $\hat\xi=\xi^*$ is $P$-invariant.
  Now, $\lambda$ lies under $\xi^*=\epsilon$. By Lemma \ref{tom}, 
 we conclude that $\lambda$ is $P$-invariant, which is a contradiction,
 since we already established that $Q<P$.
 \end{proof}

 \medskip
 The following is Theorem A of the Introduction.
 
 \begin{cor}
 Suppose that $G$ is $p$-solvable and $B$ is the principal $p$-block of $G$ with
 non-abelian defect group $P$. Then $mh(B)=mh(P)$.
 \end{cor}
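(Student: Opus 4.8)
The plan is to obtain the Corollary from Theorem~\ref{main} together with the inequality $mh(D)\le mh(B)$ recalled in the Introduction (a consequence of \cite{Ro}), after a standard reduction.

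First I would reduce to $\oh{p'}G=1$: putting $N=\oh{p'}G$, the block $B$ covers $\{1_N\}=B_0(N)$, so every $\chi\in\irr B$ has $N$ in its kernel, and inflation from $G/N$ identifies $\irr{B_0(G/N)}$ with $\irr B$ preserving degrees; since $PN/N\cong P$ is again a non-abelian Sylow $p$-subgroup, $mh(B)$ and $mh(P)$ are unchanged, so we may assume $N=1$. Then $P\in\syl pG$ is a defect group of $B$. Write $m(P)=p^a$, so $a\ge1$; since a $p$-group has a single block, with defect group itself, the height of $\psi\in\irr P$ equals $\log_p\psi(1)$, and so $mh(P)=a$. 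By the quoted inequality, $mh(B)\ge a$, so it suffices to exhibit $\chi\in\irr B$ with $1\ne\chi(1)_p\le p^a$: since $B$ has full defect, such a $\chi$ satisfies $h(\chi)=\log_p\chi(1)_p\in\{1,\dots,a\}$, giving $mh(B)\le a$ and hence $mh(B)=a=mh(P)$.

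By Theorem~\ref{main} there is $\chi\in\irr G$ with $1\ne\chi(1)_p\le p^a$; the remaining point is that $\chi$ can be taken in $B=B_0(G)$. I would establish this by running the proof of Theorem~\ref{main} again with blocks in view, so that every character it produces lies in the principal block. The inductive reductions there pass to the normal subgroups $V=\Oh{p'}G$ and $K=\Oh pG$, to $G/K$, and to $U=LP$; each is block-compatible, using that a principal block covers a principal block, that $[G:V]$ is prime to $p$ (so that passing over $V$ preserves $p$-parts), that when $M\nor H$ with $H/M$ a $p$-group the principal block $B_0(H)$ is the unique block of $H$ lying over $B_0(M)$, and Brauer's Third Main Theorem $B_0(U)^{G}=B_0(G)$ (valid as $P\le U$ is a defect group of $B_0(G)$), so that the irreducible constituents of $\tau^{G}$ for $\tau\in\irr{B_0(U)}$ all lie in $\irr{B_0(G)}$. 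The relative-Glauberman argument occupying the rest of that proof merely derives a contradiction in the residual case, so it constructs nothing and causes no trouble here.

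The only genuinely delicate point is this last one: the \emph{statement} of Theorem~\ref{main} gives no control over the block of $\chi$, and the conclusion $\chi\in\irr{B_0(G)}$ must be extracted from its \emph{proof}. Equivalently — and this is how I would actually write it — one states and proves the sharper form of Theorem~\ref{main} in which ``$\chi\in\irr G$'' is replaced by ``$\chi\in\irr{B_0(G)}$'', the argument being unchanged apart from the block bookkeeping above; the Corollary then follows from the first two paragraphs.
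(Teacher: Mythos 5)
Your reduction to $\oh{p'}G=1$ and your use of Dade--Robinson for the inequality $mh(P)\le mh(B)$ are both fine (the paper instead cites Theorem~B of \cite{EM} for that inequality, but either is adequate). The genuine gap is in what you call the ``only genuinely delicate point.'' You assert that Theorem~\ref{main} gives ``no control over the block of $\chi$'' and that $\chi\in\irr{B_0(G)}$ must be extracted by re-running its proof with block bookkeeping. In fact no such re-running is needed, and the proposal never actually carries it out, so as written the argument is incomplete.

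The missing ingredient is Fong's theorem for $p$-solvable groups, quoted in the paper as Theorem~10.20 of \cite{N1}: if $G$ is $p$-solvable then $\irr{B_0(G)}=\irr{G/\oh{p'}G}$. Since Theorem~\ref{main} is stated under the hypotheses that $G$ is $p$-solvable and $\oh{p'}G=1$, this fact says that \emph{every} irreducible character of $G$ lies in the principal block; the statement of Theorem~\ref{main} therefore already controls the block completely, and the $\chi$ it provides is automatically in $\irr{B_0(G)}$. This is exactly why the hypothesis $\oh{p'}G=1$ appears in Theorem~\ref{main}: so that its conclusion can be read directly as a statement about the principal block. Once you cite Theorem~10.20, your first two paragraphs together with Theorem~\ref{main} finish the proof, and the entire third paragraph (the ``sharper form'' of Theorem~\ref{main}, Brauer's Third Main Theorem for $B_0(U)^G$, uniqueness of the covering block when $H/M$ is a $p$-group, etc.) can be deleted. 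As it stands, that paragraph is both a detour and a hole, since you only sketch the block-compatibility checks --- e.g.\ that $B_0(U)^G$ is even defined for $U=LP$ --- without verifying them.
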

 
 \begin{proof}
 Since $B$ is the principal block,
 we have that $P \in \syl pG$.
 By Theorem B of \cite{EM}, we only need to show that $mh(B)\le mh(P)$.
 Suppose that $1<p^a$ is the smallest of the irreducible character degrees of $P$. We need to show that there is $\chi \in \irr B$ such that $1 \ne \chi(1)_p \le p^a$.  By Theorem 10.20 of \cite{N1}, we have that
  $\irr B=\irr{G/\oh{p'}G}$.  Hence, we want to show that there exists $\chi \in \irr{G/\oh{p'}G}$   such that
   $1 \ne \chi(1)_p \le p^a$. But this follows from Theorem \ref{main}.
 \end{proof}

 There does not seem to exist a direct relationship between the irreducible characters $\alpha$ of $P\in \syl pG$ with
 $\alpha(1)=m(P)$ and the irreducible characters of the principal block $B_0$ of $G$ whose degree has $p$-part  $m(P)$, or at least one that  respects  fields of values (say, over the $p$-adics). 
 For instance, in ${\tt SmallGroup}(96,64)$,  for $p=2$, we have that $m(P)=2$, and the irreducible characters of
 degree 2 of $P$ have field of values $\Q$ and $\Q(i)$; 
 on the other hand, the field of values of the irreducible characters in the principal $2$-block of $G$ whose 2-part is $2$ are rational-valued.
 Continuing with $p=2$, say, and writing   
 $c(\chi)$ for the conductor of the character $\chi$ (that is, $c(\chi)$ is the smallest positive integer $n$ such that
 the values of $\chi$ are in the cyclotomic field $\Q_n$), it would be interesting to compare ${\rm min}\{(c(\alpha)\,  |\, \alpha \in \irr P, \alpha(1)=m(P)\}$ with ${\rm min}\{(c(\chi)_p\,  |\, \chi \in \irr{B_0},  \, \chi(1)_p=m(P)\}$ for any finite group. We will refrain from making any formal statement.

\end{document}